\newtheorem{pro}{Proposition}[section]
\newtheorem{df}{Definition}[section]
\newtheorem{thm}{Theorem}[section]
\newtheorem{la}{Lemma}[section]
\newtheorem{re}{Remark}[section]
\begin{document}
%%%%%%%%%%%%%%%%%%%%%
%Title
%%%%%%%%%%%%%%%%%%%%%
\title{\large\textbf{Reflected Backward Stochastic Differential Equations Driven by
L\'{e}vy Process  }}

\author{\textbf { Yong Ren$^1$,\footnote{
Correspondence author,  E-mail:brightry@hotmail.com} \hspace{0.5cm}
Xiliang Fan$^2$}
\\
 1. School of Mathematics and Physics, University of Tasmania,\\
GPO
Box 252C-37, Hobart, Tasmania 7001, Australia \\
2. Department of Mathematics, \\
Anhui Normal University,Wuhu
241000,China \\
}
\date{}
\maketitle
%%%%%%%%%%%%%%%%%%
%Abstract
%%%%%%%%%%%%%%%%%%

\textbf{Abstract:}\ In this paper, we deal with a class of reflected
backward stochastic differential equations associated to the
subdifferential operator of a lower semi-continuous convex function
driven by Teugels martingales associated with L\'{e}vy process. We
obtain the existence and uniqueness of solutions to these equations
by means of the penalization method. As its application, we give a
probabilistic interpretation for the solutions of a class of partial differential-integral inclusions. \\
 \textbf{Keywords:} Backward stochastic differential
 equation; \ partial differential-integral inclusion;
 \ L\'{e}vy process;\ Teugels martingale; \ penalization method.\\
\textbf{Mathematics Subject Classification:}\ 60H10;
60H30; 60H99.\\

%%%%%%%%%%%%
%paper
%%%%%%%%%%%%
\section{Introduction}
On the one hand, partial differential-integral inclusions (PDIIs in
short) play an important role in characterizing many social,
physical, biological and engineering problems, one can see
Balasubramaniam and Ntouyas \cite{B} and references therein. On the
other hand, reflected backward stochastic differential equations
(RBSDEs in short) associated to a multivalued maximal monotone
operator defined by the subdifferential of a convex function has
first been introduced by Gegout-Petit \cite{Geg}. Further, Pardoux
and R\^{a}canu \cite{PR} gave the existence and uniqueness of the
solution of RBSDEs, on a random (possibly infinite) time interval,
involving a subdifferential operator was proved in order to to give
the probabilistic interpretation for the viscosity solution of some
parabolic and elliptic variational inequalities. Following, Ouknine
\cite{Ou}, N$^,$Zi and Ouknine \cite{Mn}, Bahlali et al.
\cite{Ba1,Ba2} discussed this type RBSDEs driven by Brownian motion
or the combination of Brownian motion and Poisson random measure
under the conditions of Lipschitz, locally Lipschitz or some
monotone conditions on the coefficients.  El Karoui et al. \cite{El}
have got another type RBSDEs different here, where one of the
components of the solution is forced to stay above a given barrier,
which provided a probabilistic formula for the viscosity solution of
an obstacle problem for a parabolic PDE. Since then, there were many
works on this topic. One can see Matoussi \cite{Ma}, Hamed\`{e}ne
\cite{Ha1,Ha2}, Lepeltier and Xu \cite{L}, Ren et al.
\cite{Ren1,Ren3} and so on.

The main tool in the theory of BSDEs is the martingale
representation theorem, which is well known for martingale which
adapted to the filtration of the Brownian motion or that of Poisson
point process (Pardoux and Peng \cite{PP1}, Tang and Li \cite{Ta}).
Recently, Nualart and Schoutens \cite{Nu1} gave a martingale
representation theorem associated to L\'{e}vy process. Furthermore,
they showed the existence and uniqueness of solutions to BSDEs
driven by Teugels martingales associated with L\'{e}vy process with
moments of all orders in \cite{Nu2}. The results were important from
a pure mathematical point of view as well as in the world of
finance. It could be used for the purpose of option pricing in a
L\'{e}vy market and related partial differential equation which
provided an analogue of the famous Black-Scholes partial
differential equation.

Motivated by the above works, the purpose of the present paper is to
consider RBSDEs associated to the subdifferential operator of a
lower semi-continuous convex function driven by Teugels martingales
associated with L\'{e}vy process which considered in Nualart and
Schoutens \cite{Nu1,Nu2}. We obtain the existence and uniqueness of
the solutions for RBSDEs. As its application, we give a
probabilistic interpretation for a class of PDIIs.

The paper is organized as follows. In Section 2, we introduce some
preliminaries and notations. Section 3 is devoted to the proof of
the existence and uniqueness of the solutions to RBSDEs driven by
L\'{e}vy processes by means of the penalization methods. A
probabilistic interpretation for a class of PDIIs by our RBSDEs is
given in the last section.
\section{Preliminaries and notations}

Let $T>0$ be a fixed terminal time and $(\Omega, \mathcal{F},P)$ be
a complete probability space. Let
  $\{L_t:t\in [0,T]\}$ be a $\mathbb{R}$-valued L\'{e}vy process corresponding to a standard L\'{e}vy measure
  $\nu$ whose characteristic function has the
following form:
\begin{center}
$E(e^{iuL_t})=exp[iaut-\frac{1}{2}\sigma^2u^2t+t\int_{\mathbb{R}}(e^{iux}-1-iux1_{\{|x|<1\}})\nu(dx)],$
\end{center}
where $a\in \mathbb{R}, \sigma\geq 0$. Furthermore, L\'{e}vy measure
$\nu$ satisfying the following conditions:
  \par
 (1) $\int_{\mathbb{R}}(1\wedge y^2)\nu(dy)< \infty;$
 \par
 (2) $\int_{]-\varepsilon,\varepsilon[^c}e^{\lambda |y|}\nu(dy)< \infty,$ for every $\varepsilon>0$
  and for some $\lambda>0$.

This shows that $L_t$ has moments of all orders. We denote by
$(H^{(i)})_{i\geq 1}$ the linear combination of so called Teugels
Martingale $Y_t^{(i)}$ defined as follows associated with the
L\'{e}vy process
  $\{L_t:t\in [0,T]\}$. More precisely
  $$H^{(i)}_t=c_{i,i}Y_t^{(i)}+c_{i,i-1}Y_t^{(i-1)}+\cdots+c_{i,1}Y_t^{(1)},$$
  where $Y_t^{(i)}=L_t^{(i)}-E[L_t^{(i)}]=L_t^{(i)}-tE[L_1^{(i)}]$ for all $i\geq 1$ and $L_t^{(i)}$ are
  power-jump processes.
  That is, $L_t^{(i)}=L_t$ and $L_t^{(i)}=\sum_{0<s\leq t}(\triangle L_t)^i$ for $i \geq 2$. It was shown
  in Naulart and
  Schoutens \cite{Nu1} that the coefficients $c_{i,k}$ correspond to the orthonormalization of the polynomials
  $1,x,x^2,\cdots$
  with respect to the measure $\mu(dx)=x^2\nu(dx)+\sigma^2\delta_0(dx)$:
  $$q_{i-1}=c_{i,i}x^{i-1}+c_{i,i-1}x^{i-2}+\cdots+c_{i,1}.$$
  The martingale $(H^{(i)})_{i\geq 1}$ can be chosen to be pairwise strongly orthonormal
  martingale. Furthermore, $[H^{(i)},H^{(j)}], i\neq j,$ and
  $\{[H^{(i)},H^{(j)}]_t-t\}_{t\geq 0}$ are uniformly integrable
  martingales with initial value 0, i.e.
  $<H^{(i)},H{(j)}>_t=\delta_{ij}t.$

\begin{re} If $\nu=0$, we are in the classic Brownian case and all
non-zero degree polynomials $q_i(x)$ will vanish, giving
$H_t^{(i)}=0,i=2,3,\cdots.$ If $\mu$ only has mass at 1, we are in
the Poisson case; here also $H_t^{(i)}=0,i=2,3,\cdots.$ Both cases
are degenerate in this L\'{e}vy framework.
\end{re}

Let $\mathcal{N}$ denote the totality of $P-$null sets of
$\mathcal{F}$. For each $t\in [0,T]$, we define
  \begin{center}
  $\mathcal{F}_t \triangleq \sigma(L_s,0\leq s\leq t)\vee \mathcal{N}.$
  \end{center}
Let us introduce some spaces:
\par
$\bullet \mathcal{H}^2=\{\varphi:\mathcal{F}_t-$progressively
measurable process, real-valued process, s.t.
$E\int_0^T|\varphi_t|^2dt<\infty\}$ and denote by $\mathcal{P}$$^2$
the subspace of $\mathcal{H}$$^2$ formed by the predictable
processes;
\par
$\bullet S^2=\{\varphi:\mathcal{F}_t-$progressively measurable
process, real-valued process, s.t. $E(\sup_{0\leq t \leq
T}|\varphi(t)|^2)< \infty \};$
\par
 $\bullet l^2=\{(x_n)_{n\geq 0}: $be real valued sequences such that $\sum_{i=0}^\infty x_i^2\leq \infty\}$. We shall denote by $\mathcal{H}$$^2(l^2)$ and $\mathcal{P}$$^2(l^2)$ the corresponding spaces of
  $l^2$-valued process equipped with the norm
  $$||\varphi||^2=\sum_{i=0}^\infty E\int_0^T|\varphi_t^{(i)}|^2dt.$$

Now, we give the following assumptions:
 \par
 (H1) The terminal value $\xi\in L^2(\Omega,\mathcal{F}$$_T,P)$;
 \par
 (H2) The coefficient $f:[0,T]\times \Omega \times R$$\times l^2
 \rightarrow R$ is $\mathcal{F}_t-$progressively measurable, such that
 $f(\cdot,0,0)\in \mathcal{H}^2;$

\par
 (H3) There exists a constant $C>0$  such
 that for every $(\omega,t)\in \Omega\times
 [0,T],(y_1,z_1),(y_2,z_2)\in R\times l^2$
\par
$|f(t,y_1,z_1)-f(t,y_2,z_2)|^2\leq C(|y_1-y_2|^2+||z_1-z_2||^2);$

(H4) Let $\Phi:R\rightarrow (-\infty,+\infty]$ be a proper lower
semicontinuous convex function.

Define:

$Dom(\Phi) := \{x\in R :\Phi(x)<+\infty \},$

$\partial \Phi(x) := \{x^\ast\in R:<x^\ast,x-u>+\Phi(u)\leq
\Phi(x),\forall x\in R\},$

$Dom(\partial \Phi)) := \{x\in R:\partial \Phi \neq \emptyset\},$

$Gr(\partial \Phi)) := \{(x,x^\ast)\in R^2: x\in Dom (\partial
\Phi)), x^\ast \in \partial \Phi (x)\},$

Now, we introduce a multivalued maximal monotone operator on $R$
defined by the subdifferential of the above function $\Phi$. The
details appeared in Brezis \cite{Br}.

For all $x\in R$, define

$\Phi_n(x)=\min_{y}(\frac{n}{2}|x-y|^2+\Phi(y)).$

Let $J_n(x)$ be the unique solution of the differential inclusion
$x\in J_n(x)+\frac{1}{n}\partial \Phi(J_n(x)),$  which is called the
resolvent of the monotone operator $A=\partial \Phi$. Then, we have
\begin{pro}
(1) $\Phi_n: R\rightarrow R$ is Lipschitz continuous;

(2) The Yosida approximation of $\partial \Phi$ is defined by
$A_n(x):=\nabla \Phi_n(x)=n(x-J_n(x)), x\in R$ which is monotonic
and Lipschitz continuous and there exists $a\in
\mbox{interior}(Dom(\Phi))$ and positive numbers $R,C$ satisfies
\begin{equation}
<\nabla \Phi_n(z)^\ast,z-a>\geq R|A_n(z)|-C|z|-C,\ \mbox{for all} \
z\in R\ \mbox{and}\ n \in N;
\end{equation}
(3) For all $x\in R$, $A_n(x)\in A(J_n(x)).$
\end{pro}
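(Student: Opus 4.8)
The plan is to obtain the three assertions from the standard theory of the Moreau--Yosida regularization of a proper lower semicontinuous convex function (Brezis \cite{Br}), treating the inequality (2.1) as the only point that genuinely needs a dedicated argument.

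First I would settle the structural facts. For fixed $x$ and $n$ the map $y \mapsto \frac n2|x-y|^2 + \Phi(y)$ is proper, lower semicontinuous, strictly convex and coercive, so it attains its minimum at a unique point, which by definition is $J_n(x)$; the first-order optimality condition then reads $n(x - J_n(x)) \in \partial\Phi(J_n(x))$, i.e. $x \in J_n(x) + \frac1n\partial\Phi(J_n(x))$. This immediately yields (3), namely $A_n(x) = n(x - J_n(x)) \in \partial\Phi(J_n(x)) = A(J_n(x))$, and identifies $J_n = (I + \frac1n A)^{-1}$. Monotonicity of $A = \partial\Phi$ shows $J_n$ is nonexpansive: with $u_i = J_n(x_i)$ one gets $\langle (x_1 - x_2) - (u_1 - u_2), u_1 - u_2\rangle \ge 0$, hence $|u_1 - u_2|^2 \le \langle x_1 - x_2, u_1 - u_2\rangle$ and $|J_n(x_1) - J_n(x_2)| \le |x_1 - x_2|$. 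Consequently $A_n = n(I - J_n)$ is Lipschitz, and writing $\langle A_n(x_1) - A_n(x_2), x_1 - x_2\rangle$ as the sum of $n\langle (x_1 - u_1) - (x_2 - u_2), u_1 - u_2\rangle \ge 0$ and $n|(x_1 - u_1) - (x_2 - u_2)|^2 \ge 0$ shows $A_n$ is monotone. Finally $\Phi_n$ is the inf-convolution of $\Phi$ with $\frac n2|\cdot|^2$, hence convex, and it is finite everywhere since $\Phi_n(x) \le \frac n2|x - y_0|^2 + \Phi(y_0)$ for any $y_0 \in \mathrm{Dom}(\Phi)$; a routine envelope computation gives $\nabla\Phi_n = A_n$, so $\Phi_n$ is $C^1$ with Lipschitz gradient and is locally Lipschitz, which is (1) and the remaining parts of (2).

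The substantive step is the coercivity-type estimate (2.1). Taking $a \in \mathrm{interior}(\mathrm{Dom}(\Phi))$ (nonempty by hypothesis), and using that a convex function is locally bounded on the interior of its domain, I would fix $R > 0$ with $\bar B(a,R) \subset \mathrm{Dom}(\Phi)$ and $M := \sup_{\bar B(a,R)}\Phi < \infty$, and fix $g_a \in \partial\Phi(a)$. Since $z - a = (z - J_n(z)) + (J_n(z) - a)$ and $\langle A_n(z), z - J_n(z)\rangle = \frac1n|A_n(z)|^2 \ge 0$, it suffices to bound $\langle A_n(z), J_n(z) - a\rangle$ from below. Applying the subgradient inequality for $A_n(z) \in \partial\Phi(J_n(z))$ at the test point $w = a + R\,A_n(z)/|A_n(z)|$ (the case $A_n(z) = 0$ being trivial, and $\Phi(w) \le M$ since $w \in \bar B(a,R)$) gives $\langle A_n(z), J_n(z) - a\rangle \ge R|A_n(z)| + \Phi(J_n(z)) - M$. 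It then remains to control $\Phi(J_n(z))$ from below uniformly in $n$: the subgradient inequality at $a$ gives $\Phi(J_n(z)) \ge \Phi(a) - |g_a|\,|J_n(z) - a|$, while $\Phi_n(a) \le \Phi(a)$ combined with the subgradient inequality at $a$ yields $|J_n(a) - a| \le 2|g_a|/n$, so nonexpansiveness of $J_n$ gives $|J_n(z) - a| \le |z - a| + 2|g_a|$. Collecting terms produces $\langle A_n(z), z - a\rangle \ge R|A_n(z)| - C|z| - C$ with $C$ depending only on $a, R, M, g_a$.

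I expect the main obstacle to be exactly this last point: making the constants $R$ and $C$ independent of $n$. The naive lower bounds on $\Phi(J_n(z))$ involve $J_n(0)$ or $J_n(a)$, which a priori depend on $n$, and the crux is that nonexpansiveness of $J_n$ together with the elementary bound $|J_n(a) - a| \le 2|g_a|/n$ removes this dependence. Everything else in the statement is standard and can be quoted from Brezis \cite{Br}.
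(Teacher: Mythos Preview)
The paper does not prove this proposition; it merely states it and defers to Brezis \cite{Br} for the details. Your reconstruction is correct and is exactly the standard argument one finds there, including the key device for (2.1) of testing the subgradient inequality for $A_n(z)\in\partial\Phi(J_n(z))$ at the boundary point $w=a+R\,A_n(z)/|A_n(z)|$ and then controlling $|J_n(a)-a|$ uniformly in $n$ via the affine minorant at $a$. The only quibble is that $\Phi_n$ is in general only \emph{locally} Lipschitz (its gradient $A_n$ is globally Lipschitz), which you correctly note; this is an imprecision in the paper's statement (1), not a gap in your argument, and it does not affect anything used later in the paper.
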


Further, we assure that $\xi \in \overline{Dom(\Phi)}$ and
$E\Phi(\xi)<\infty.$

 This paper is mainly discuss the following
reflected backward stochastic differential equations. In doing so,
we first give its definition.
 \begin{df}
 By definition a solution associated with the above assumptions $(\xi,f,\Phi)$ is
 a triple
 $(Y_t,Z_t,K_t)_{0\leq t \leq T}$ of progressively measurable processes  such that
\begin{equation}
\begin{array}{rcl}Y_t
&=&\xi+\int_t^Tf(s,Y_s,Z_s)ds+K_T-K_t-\sum_{i=1}^\infty\int_t^TZ_s^{(i)}dH_s^{(i)},\
0\leq t \leq T ,\end{array}
\end{equation}
and satisfying

(1) $ (Y_t,Z_t)_{0\leq t \leq T}\in
 S^2\times \mathcal{P}^2(l^2)\ \mbox{and}\ \{Y_t, 0\leq t \leq T\}$ is c\`{a}dl\`{a}g (right continuous with left limits) and take
values in $\overline{Dom (\Phi)}$;

(2) $\{K_t, 0\leq t \leq T\}$ is absolutely continuous, $K_0=0$, and
for all progressively measurable and right continuous process
$\{(\alpha_t,\beta_t),0\leq t \leq T\}\in Gr(\partial \Phi),$ we
have \begin{center} $\int_0^\cdot(Y_t-\alpha_t)(dK_t+\beta_tdt)\leq
0.$\end{center}
\end{df}
In order to obtain the existence and uniqueness of the solutions to
(2), we consider the following penalization form of (2):
\begin{equation}
\begin{array}{rcl}Y_t^n
&=&\xi+\int_t^T[f(s,Y_s^n,Z_s^n)-A_n(Y_s^n)]ds-\sum_{i=1}^\infty\int_t^TZ_s^{(i),n}dH_s^{(i)},\
0\leq t \leq T ,\end{array}
\end{equation}
where $\xi, f$ satisfies the assumptions stated above and $A_n$ is
the Yosida approximation of the operator $A=\partial \Phi.$ Since
$A_n$ is Lipschitz continuous, it is known from the result of
 \cite{Nu2}, that Eq. (3) has a unique solution.

Set $K_t^n=-\int_0^t A_n(Y_s^n), \ 0\leq t \leq T.$ Our aim is to
prove the sequence $(Y^n,Z^n,K^n)$ convergence to a sequence
$(Y,Z,K)$ which is our desired solution to RBSDEs (2).
\section{Existence and uniqueness of the solutions}
The principal result of the paper is the following theorem.
\begin{thm}
Assume that assumptions on $\xi,f$ and $\Phi$ hold, the RBSDEs (2)
has a unique solution $(Y_t,Z_t,K_t)_{0\leq t \leq T}.$ Moreover,

$\lim_{n\rightarrow \infty}E\sup_{0\leq t\leq T}|Y_t^n-Y_t|^2=0,$

$\lim_{n\rightarrow \infty}E\int_0^T||Z_t^n-Z_t||^2dt=0,$

$\lim_{n\rightarrow \infty}E\sup_{0\leq t\leq T}|K_t^n-K_t|^2=0,$

where $(Y^n,Z^n)$ be the solution of Eq. (3).
\end{thm}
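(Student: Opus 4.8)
The plan is to carry out the standard penalization program for reflected BSDEs with a subdifferential term: establish uniform a priori estimates for the penalized solutions $(Y^n,Z^n,K^n)$, extract a limit, identify it as a solution of (2), and prove uniqueness by a monotonicity argument.

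First I would derive the uniform estimates. Applying Itô's formula for the Teugels martingales to $|Y^n_t|^2$ on $[t,T]$ and taking expectations, the stochastic integrals against $H^{(i)}$ vanish, and the term involving $A_n(Y^n_s)$ is handled by the coercivity inequality (2.1) of Proposition~2.1: writing $Y^n_s(A_n(Y^n_s)) = (Y^n_s - a)A_n(Y^n_s) + a\,A_n(Y^n_s)$ and using $\langle A_n(z), z - a\rangle \ge R|A_n(z)| - C|z| - C$, one controls $\int_t^T Y^n_s A_n(Y^n_s)\,ds$ from below by $R\int_t^T|A_n(Y^n_s)|\,ds$ up to terms in $\sup|Y^n|$ and $|\xi|^2$. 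Combined with the Lipschitz assumption (H3) on $f$, Young's inequality, and Gronwall's lemma, this yields a constant $M$ independent of $n$ with
\[
E\sup_{0\le t\le T}|Y^n_t|^2 + E\int_0^T\|Z^n_t\|^2\,dt + E\Big(\int_0^T|A_n(Y^n_s)|\,ds\Big)^2 \le M.
\]
The Burkholder--Davis--Gundy inequality is needed to pass from the $L^2$ bound on $Y^n$ to the bound on its supremum; since $K^n_t = -\int_0^t A_n(Y^n_s)\,ds$, the last bound gives equi-boundedness of $\{K^n\}$ in $S^2$ and equicontinuity (all $K^n$ are absolutely continuous with uniformly $L^1$-bounded derivatives).

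Next I would prove the sequence is Cauchy. For $n,m$, apply Itô's formula to $|Y^n_t - Y^m_t|^2$. The crucial cross term is $-\int_t^T (Y^n_s - Y^m_s)(A_n(Y^n_s) - A_m(Y^m_s))\,ds$; using property (3) of Proposition~2.1, namely $A_n(x)\in A(J_n(x))$, together with monotonicity of $A$ and the estimate $|J_n(x)-x|=\frac1n|A_n(x)|$, this term is bounded above by $(\frac1n + \frac1m)$ times expressions controlled by the uniform estimate. The Lipschitz term in $f$ and Gronwall then give $E\sup_t|Y^n_t-Y^m_t|^2 + E\int_0^T\|Z^n_t-Z^m_t\|^2\,dt \to 0$, so $(Y^n,Z^n)$ converges in $S^2\times\mathcal P^2(l^2)$ to some $(Y,Z)$, and consequently $K^n\to K$ in $S^2$ via its defining equation, with $K$ absolutely continuous and $K_0=0$. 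Passing to the limit in (3) gives (2). The remaining point is the inclusion, i.e. condition (2) of Definition~2.1: for $(\alpha,\beta)\in Gr(\partial\Phi)$, the subdifferential inequality $\langle A_n(Y^n_s),Y^n_s-\alpha_s\rangle \ge \langle\beta_s, J_n(Y^n_s)-\alpha_s\rangle$ (from $A_n(Y^n_s)\in\partial\Phi(J_n(Y^n_s))$), integrated and passed to the limit using $J_n(Y^n)\to Y$, yields $\int_0^\cdot(Y_t-\alpha_t)(dK_t+\beta_t\,dt)\le 0$; this also forces $Y_t\in\overline{Dom(\Phi)}$.

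Uniqueness follows by taking two solutions $(Y,Z,K)$ and $(Y',Z',K')$, applying Itô's formula to $|Y_t-Y'_t|^2$, and observing that the term $\int_t^T(Y_s-Y'_s)(dK_s - dK'_s)\le 0$ because, writing the contribution of each solution against the other's graph data via condition (2) of Definition~2.1, the monotonicity of $\partial\Phi$ makes this cross term nonpositive; Gronwall then gives $Y=Y'$, hence $Z=Z'$ and $K=K'$. The main obstacle I anticipate is the careful handling of the coercivity inequality (2.1) to get the uniform $L^1$ bound on $A_n(Y^n)$ — this is what controls $K^n$ and ultimately makes the limit $K$ absolutely continuous rather than merely of bounded variation — and, technically, justifying all the term-by-term manipulations of the infinite sum $\sum_i\int Z^{(i)}\,dH^{(i)}$ in the $l^2$-valued Itô formula, which requires the strong orthonormality $\langle H^{(i)},H^{(j)}\rangle_t=\delta_{ij}t$ recorded above.
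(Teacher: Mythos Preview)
Your overall plan matches the paper's penalization scheme, and your treatment of uniqueness and of the inclusion at the limit is fine. However, there is a genuine gap in the a priori estimates that breaks the Cauchy step.

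From the It\^o/coercivity argument you describe, what one actually obtains (this is the paper's Lemma~3.1) is
\[
E\sup_{0\le t\le T}|Y^n_t|^2 + E\int_0^T\|Z^n_t\|^2\,dt + E\int_0^T|A_n(Y^n_s)|\,ds \le C_1,
\]
i.e.\ only an $L^1$-in-time bound on $A_n(Y^n)$, not the bound $E\big(\int_0^T|A_n(Y^n_s)|\,ds\big)^2\le M$ you claim. More importantly, in the Cauchy computation the cross term satisfies
\[
-(Y^n_s-Y^m_s)\big(A_n(Y^n_s)-A_m(Y^m_s)\big)\ \le\ \tfrac{1}{2m}|A_n(Y^n_s)|^2+\tfrac{1}{2n}|A_m(Y^m_s)|^2,
\]
so what is actually needed is the $L^2$-in-time bound
\[
\sup_n E\int_0^T|A_n(Y^n_s)|^2\,ds<\infty.
\]
This is strictly stronger than any bound on $E\big(\int|A_n|\,ds\big)^2$ and does not follow from the coercivity inequality~(2.1). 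The paper proves it as a separate lemma (Lemma~3.2) by applying It\^o's formula to $\psi_n(Y^n_t)$ with $\psi_n=\Phi_n/n$: since $\nabla\psi_n=\tfrac{1}{n}A_n$, the drift term produces exactly $\tfrac{1}{n}E\int_t^T|A_n(Y^n_s)|^2\,ds$, and the remaining terms are controlled by Lemma~3.1 and $E\Phi(\xi)<\infty$. You need to insert this step; without it the ``expressions controlled by the uniform estimate'' in your Cauchy argument are not, in fact, controlled.

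The same $L^2$ bound is what the paper uses to show that $(K^n)$ is bounded in $L^2(\Omega;H^1(0,T;\mathbb{R}))$, so that the limit $K$ is absolutely continuous with $dK_t/dt=V_t$ and $-V_t\in\partial\Phi(Y_t)$. Your argument that ``uniformly $L^1$-bounded derivatives'' yield an absolutely continuous limit is insufficient: without uniform integrability of the derivatives (which the $L^2$ bound provides), the limit could a priori have a singular part. Finally, to pass to the limit in $\int_0^\cdot\langle J_n(Y^n_t)-\alpha_t,\,dK^n_t+\beta_t\,dt\rangle$ the paper invokes a Saisho-type lemma (Lemma~3.4) on convergence of Stieltjes integrals under uniform convergence of integrands and integrators with uniformly bounded variation; you should cite or prove such a statement rather than pass to the limit tacitly.
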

In the sequel, $C>0$ denotes a constant which can change from line
to line.

 The proof of the Theorem is divided into the following Lemmas.
\begin{la}
Under the assumptions of Theorem 3.1, there exists a constant
$C_1>0$ such that for all $n\geq 1$
\begin{center}
$E(\sup_{0\leq t \leq
T}|Y_t^n|^2+\int_0^T|Z_s^n|^2ds+\int_0^T|A_n(Y_s^n)|ds)\leq C_1.$
\end{center}
\end{la}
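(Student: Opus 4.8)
The plan is to apply Itô's formula to $|Y^n_t|^2$ on $[t,T]$, using the penalized equation (3), and then to estimate the various terms. First I would write
\[
|Y^n_t|^2 + \sum_{i=1}^\infty \int_t^T |Z^{(i),n}_s|^2\,ds
= |\xi|^2 + 2\int_t^T Y^n_s f(s,Y^n_s,Z^n_s)\,ds - 2\int_t^T Y^n_s A_n(Y^n_s)\,ds - 2\sum_{i=1}^\infty\int_t^T Y^n_s Z^{(i),n}_s\,dH^{(i)}_s,
\]
where the bracket terms $[H^{(i)},H^{(j)}]$ contribute $\sum_i\int_t^T|Z^{(i),n}_s|^2\,ds$ because $\langle H^{(i)},H^{(j)}\rangle_s=\delta_{ij}s$. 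The key structural input is the monotonicity-type bound (1) from Proposition 2.1: writing $A_n(Y^n_s)=\nabla\Phi_n(Y^n_s)$ and inserting the point $a\in\mathrm{interior}(Dom(\Phi))$, I would use
\[
Y^n_s A_n(Y^n_s) = (Y^n_s-a)A_n(Y^n_s) + a\,A_n(Y^n_s) \ge R|A_n(Y^n_s)| - C|Y^n_s| - C + a\,A_n(Y^n_s),
\]
so that $-2\int_t^T Y^n_s A_n(Y^n_s)\,ds \le -2R\int_t^T|A_n(Y^n_s)|\,ds + C\int_t^T|Y^n_s|\,ds + C + 2|a|\int_t^T|A_n(Y^n_s)|\,ds$. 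Choosing things so the $|a|$ term is absorbed (this is exactly why $R$ appears in (1)), the $\int|A_n|$ term gets a good sign; $C\int|Y^n_s|\,ds$ is handled by $2ab\le \varepsilon a^2+\varepsilon^{-1}b^2$. For the $f$-term I would use (H3) in the form $|f(s,y,z)|\le |f(s,0,0)|+C(|y|+\|z\|)$ and Young's inequality to get $2|Y^n_s f(s,Y^n_s,Z^n_s)| \le C|Y^n_s|^2 + \tfrac12\|Z^n_s\|^2 + C|f(s,0,0)|^2$, so that half of the $\|Z^n\|^2$ term on the left survives.

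Next I would take expectations. The stochastic integral $\sum_i\int_t^T Y^n_s Z^{(i),n}_s\,dH^{(i)}_s$ is a martingale (at least after a localization argument, using that $Y^n\in S^2$, $Z^n\in\mathcal P^2(l^2)$ for each fixed $n$ by the Nualart–Schoutens result), so its expectation vanishes. This yields, for a suitable constant,
\[
E|Y^n_t|^2 + \tfrac12 E\!\int_t^T\!\|Z^n_s\|^2\,ds + 2R\,E\!\int_t^T\!|A_n(Y^n_s)|\,ds \le E|\xi|^2 + C\,E\!\int_0^T\!|f(s,0,0)|^2\,ds + C + C\,E\!\int_t^T\!|Y^n_s|^2\,ds.
\]
Gronwall's lemma applied to $t\mapsto E|Y^n_t|^2$ then gives $\sup_{0\le t\le T}E|Y^n_t|^2\le C_1$ with $C_1$ independent of $n$ (using (H1) that $\xi\in L^2$ and (H2) that $f(\cdot,0,0)\in\mathcal H^2$); feeding this back into the displayed inequality bounds $E\int_0^T\|Z^n_s\|^2\,ds$ and $E\int_0^T|A_n(Y^n_s)|\,ds$ uniformly in $n$.

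Finally, to upgrade $\sup_t E|Y^n_t|^2$ to $E\sup_t|Y^n_t|^2$, I would go back to the Itô identity, keep the stochastic integral, take $\sup_{0\le t\le T}$ and then expectation, and apply the Burkholder–Davis–Gundy inequality to $E\sup_t\big|\sum_i\int_t^T Y^n_s Z^{(i),n}_s\,dH^{(i)}_s\big|$, bounding it by $C\,E\big(\int_0^T |Y^n_s|^2\,d[H]_s\big)^{1/2}$. Here I would dominate $[H^{(i)}]$ using that $\{[H^{(i)},H^{(i)}]_s-s\}$ is a UI martingale, then split via $ab\le \tfrac12 E\sup|Y^n|^2\cdot(\text{small})+C\,E\int\|Z^n\|^2$, absorbing $\tfrac12 E\sup_t|Y^n_t|^2$ into the left side. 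Combined with the already-established uniform bounds on $E\int\|Z^n\|^2$ and $E|\xi|^2$, this gives the claimed estimate. The main obstacle is the careful bookkeeping around the term $a\,A_n(Y^n_s)$: one must exploit inequality (1) precisely — in particular that the constant $R$ there can be taken large relative to $|a|$, or equivalently rescale — so that $E\int_0^T|A_n(Y^n_s)|\,ds$ ends up with a strictly positive coefficient after all absorptions; getting this sign right (and the localization making the stochastic integral a genuine martingale) is where the real care is needed, the rest being standard Young/Gronwall/BDG manipulations.
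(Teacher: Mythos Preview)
Your overall strategy---It\^{o}'s formula, inequality (1) from Proposition 2.1, Young's inequality, Gronwall, then Burkholder--Davis--Gundy---is exactly the paper's. The one substantive difference is that the paper applies It\^{o}'s formula to $|Y^n_t-a|^2$ rather than to $|Y^n_t|^2$. This is not merely cosmetic: centering at $a$ makes the product $(Y^n_s-a)A_n(Y^n_s)$ appear directly, so inequality (1) yields the term $-2R\int_t^T|A_n(Y^n_s)|\,ds$ with the correct sign and no leftover $a\,A_n(Y^n_s)$ piece. Your decomposition $Y^n_s A_n(Y^n_s)=(Y^n_s-a)A_n(Y^n_s)+a\,A_n(Y^n_s)$ produces precisely the sign problem you flag at the end, and your proposed fix---that $R$ can be taken large relative to $|a|$, or that one can ``rescale''---is not correct as stated: in Proposition 2.1 both $a$ and $R$ are determined by the geometry of $Dom(\Phi)$ (essentially $B(a,R)\subset Dom(\Phi)$), and there is no reason to expect $R>|a|$. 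The clean way out is exactly what the paper does: apply It\^{o} to $|Y^n_t-a|^2$ from the start, run Gronwall on $E|Y^n_t-a|^2$, and then recover the bound on $E|Y^n_t|^2$ via $|Y^n_t|^2\le 2|Y^n_t-a|^2+2|a|^2$. Apart from this point your plan and the paper's proof coincide.
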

\begin{proof}
By It\^{o} formula, we have
\begin{equation}\begin{array}{rcl}|Y_t^n-a|^2&=&|\xi-a|^2+2\int_t^T(Y_s^n-a)f(s,Y_s^n,Z_s^n)ds\\
&&-2\int_t^T(Y_s^n-a)A_n(Y_s^n)ds-\int_t^T||Z_s^n||^2ds\\
&&-2\sum_{i=1}^\infty
\int_t^T(Y_s^n-a)Z_s^{(i),n}dH_s^i.
 \end{array}\end{equation}
Taking expectation on the both sides and considering (1), we obtain

$E|Y_t^n-a|^2+E\int_t^T||Z_s^n||^2ds$
\begin{equation}\begin{array}{rcl}&\leq&E |\xi-a|^2+2E\int_t^T(Y_s^n-a)f(s,Y_s^n,Z_s^n)ds\\
&&-2RE\int_t^T|A_n(Y_s^n)|ds+2CE\int_t^T|Y_s^n|ds+2C.
 \end{array}\end{equation}
Further, we get

$E|Y_t^n-a|^2+E\int_t^T||Z_s^n||^2ds+2RE\int_t^T|A_n(Y_s^n)|ds$
\begin{equation}\begin{array}{rcl}&\leq&E |\xi-a|^2+2E\int_t^T(Y_s^n-a)f(s,Y_s^n,Z_s^n)ds\\
&&+2CE\int_t^T|Y_s^n|ds+2C\\
&\leq&E
|\xi-a|^2+CE\int_t^T|Y_s^n-a|^2ds+\frac{1}{2}E\int_t^T||Z_s^n||^2ds+C,
 \end{array}\end{equation}
where we have used the elementary inequality $2ab\leq \beta^2
a^2+\frac{b^2}{\beta^2}$ for all $a,b\geq 0.$ So, we have
\begin{equation}
E|Y_t^n-a|^2+\frac{1}{2}E\int_t^T||Z_s^n||^2ds\leq
C(1+E\int_t^T|Y_s^n-a|^2ds).\end{equation} Gronwall inequality shows
\begin{equation}
E|Y_t^n-a|^2\leq C, \ \forall n.\end{equation} So that
\begin{equation}
E|Y_t^n|^2\leq C, \ \forall n.\end{equation} From (6) and (7), it is
easy to show
\begin{equation}
E\int_0^T (||Z_s^n||^2+|A_n(Y_s^n)|)ds\leq C, \ \forall
n.\end{equation} Bulkholder-Davis-Gundy inequality shows the desired
result of Lemma 3.1.
\end{proof}
\begin{la}
Under the assumptions of Theorem 3.1, there exists a constant
$C_2>0$ such that for all $n\geq 1$
\begin{center}
$E\int_0^T|A_n(Y_s^n)|^2ds\leq C_2.$
\end{center}
\end{la}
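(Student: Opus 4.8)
The plan is to apply It\^o's formula to $|A_n(Y_t^n)|^2$ — or, what amounts to the same thing since $A_n = \nabla\Phi_n$, to exploit the function $\Phi_n$ together with the monotonicity of $A_n$. Concretely, I would first note that $\Phi_n$ is $C^1$ with Lipschitz derivative $A_n$, and that by Proposition 2.1(2)–(3) we have the key monotonicity/growth estimates. The natural quantity to differentiate is $\Phi_n(Y_t^n)$: by It\^o's formula along the dynamics (3),
\begin{equation}
\begin{array}{rcl}
\Phi_n(Y_t^n) &=& \Phi_n(\xi) + \int_t^T A_n(Y_s^n)\bigl[f(s,Y_s^n,Z_s^n)-A_n(Y_s^n)\bigr]\,ds \\
&& -\,\frac{1}{2}\int_t^T \langle A_n'(Y_s^n)Z_s^n,Z_s^n\rangle\,ds - \sum_{i=1}^\infty \int_t^T A_n(Y_s^n)Z_s^{(i),n}\,dH_s^{(i)}.
\end{array}
\end{equation}
Since $\Phi_n$ is convex, $A_n' \ge 0$, so the second-order term has a favourable sign and can be dropped after rearranging. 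Taking expectations (the stochastic integral is a true martingale by Lemma 3.1, since $A_n$ is Lipschitz and $Y^n$, $Z^n$ have the required integrability) yields
\begin{equation}
E\int_t^T |A_n(Y_s^n)|^2\,ds \le E\Phi_n(\xi) - E\Phi_n(Y_t^n) + E\int_t^T A_n(Y_s^n) f(s,Y_s^n,Z_s^n)\,ds.
\end{equation}

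Next I would control each term on the right. For $E\Phi_n(\xi)$: the standing assumption $E\Phi(\xi)<\infty$ together with the classical inequality $\Phi_n(x)\le \Phi(x)$ (a property of the Moreau–Yosida approximation) gives $E\Phi_n(\xi)\le E\Phi(\xi)<\infty$, a bound uniform in $n$. For the term $-E\Phi_n(Y_t^n)$: using $a\in \mbox{interior}(Dom(\Phi))$ and the subgradient inequality $\Phi_n(y)\ge \Phi_n(a) + A_n(a)(y-a) \ge \Phi(J_n(a))+\cdots$, one sees $\Phi_n(Y_t^n)$ is bounded below by an affine function of $Y_t^n$, so $-E\Phi_n(Y_t^n)\le C(1+E|Y_t^n|)\le C$ by Lemma 3.1. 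For the cross term, apply Young's inequality: $A_n(Y_s^n)f(s,Y_s^n,Z_s^n)\le \frac12|A_n(Y_s^n)|^2 + \frac12|f(s,Y_s^n,Z_s^n)|^2$, and then use (H2), (H3) to bound $|f(s,Y_s^n,Z_s^n)|^2 \le C(1+|Y_s^n|^2+\|Z_s^n\|^2+|f(s,0,0)|^2)$, whose integrated expectation is finite and uniformly bounded by Lemma 3.1 and (H2). The $\frac12|A_n(Y_s^n)|^2$ piece is absorbed into the left-hand side. Setting $t=0$ and combining everything gives $E\int_0^T|A_n(Y_s^n)|^2\,ds\le C_2$ independent of $n$.

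The main obstacle I anticipate is the rigorous handling of the second-order It\^o term $\int_t^T\langle A_n'(Y_s^n)Z_s^n, Z_s^n\rangle\,ds$: $\Phi_n$ is only $C^1$ with Lipschitz gradient, not $C^2$, so one cannot apply the standard It\^o formula to $\Phi_n$ directly. The clean workaround is to apply It\^o's formula instead to the semimartingale $A_n(Y_t^n)$ squared — but this again needs $A_n\in C^1$. The safest route is to mollify: replace $\Phi_n$ by a smooth convex approximation $\Phi_{n,\varepsilon}$, derive the inequality with the (nonnegative, hence discardable) Hessian term, and pass to the limit $\varepsilon\to 0$ using the Lipschitz bounds, dominated convergence, and Lemma 3.1; the convexity sign of the second-order term survives the limit, so only the inequality (not an identity) is needed, which is exactly what we want. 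A secondary technical point is justifying that the series of stochastic integrals is a genuine martingale with zero expectation, which follows from the orthonormality $\langle H^{(i)},H^{(j)}\rangle_t=\delta_{ij}t$ and the $\mathcal{P}^2(l^2)$-bound on $Z^n$ from Lemma 3.1.
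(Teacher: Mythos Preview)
Your approach is essentially the paper's: apply It\^o's formula to (a rescaling of) $\Phi_n(Y^n)$, use convexity to discard the second-order correction, then Young's inequality on the cross term $A_n(Y^n_s)f(s,Y^n_s,Z^n_s)$ together with Lemma~3.1. The paper streamlines your bounds on $E\Phi_n(\xi)$ and $-E\Phi_n(Y^n_t)$ by first normalizing so that $\Phi\geq 0$, $\Phi(0)=0$ (whence $\Phi_n\geq 0$ and both terms drop for free) and working with $\psi_n:=\Phi_n/n$; note also that in this L\'evy setting the It\^o correction consists of jump terms $\Phi_n(Y_s^n)-\Phi_n(Y_{s-}^n)-A_n(Y_{s-}^n)\,\Delta Y_s^n$ rather than the Brownian Hessian term you wrote, but convexity again gives the favourable sign, so your argument (and the mollification you outline) goes through unchanged.
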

\begin{proof}
Without loss of generality, we assume $\Phi\geq 0 ,\Phi(0)=0$. Let
$\psi_n\triangleq \frac{\Phi_n}{n}.$
 For the convexity of $\psi_n$ and It\^{o} formula, we have
\begin{equation}\begin{array}{rcl}\psi_n(Y_t^n)&\leq&\psi_n(\xi)+\int_t^T\nabla\psi_n(Y_s^n)[f(s,Y_s^n,Z_s^n)-A_n(Y_s^n)]ds\\
&&-\sum_{i=1}^\infty\int_t^T \nabla \psi_n(Y_s^n)Z_s^{(i),n}dH_s^i.
 \end{array}\end{equation}
Taking expectation on the both sides, we obtain
\begin{equation}\begin{array}{rcl}E\psi_n(Y_t^n)&\leq&E\psi_n(\xi)+E\int_t^T\nabla\psi_n(Y_s^n)f(s,Y_s^n,Z_s^n)ds\\
&&-E\int_t^T\nabla\psi_n(Y_s^n)A_n(Y_s^n)ds\\
&=&E\psi_n(\xi)+E\int_t^T\nabla\psi_n(Y_s^n)f(s,Y_s^n,Z_s^n)ds\\
&&-\frac{1}{n}E\int_t^T|A_n(Y_s^n)|^2ds.
 \end{array}\end{equation}
Using the elementary inequality $2ab\leq \beta
a^2+\frac{b^2}{\beta}$, we obtain

$E\psi_n(Y_t^n)+\frac{1}{n}E\int_t^T|A_n(Y_s^n)|^2ds$
\begin{equation}\begin{array}{rcl}
&\leq&E\psi_n(\xi)+\frac{1}{2n}E\int_t^T|A_n(Y_s^n)|^2ds+\frac{1}{2n}E\int_t^T|f(s,Y_s^n,Z_s^n)|^2ds.
 \end{array}\end{equation}
Further, we have

$E\psi_n(Y_t^n)+\frac{1}{n}E\int_t^T|A_n(Y_s^n)|^2ds$
\begin{equation}\begin{array}{rcl}
&\leq&E\psi_n(\xi)+\frac{1}{2n}E\int_t^T|A_n(Y_s^n)|^2ds+\frac{1}{2n}E\int_t^T|f(s,Y_s^n,Z_s^n)|^2ds\\
&\leq&E\psi_n(\xi)+\frac{1}{2n}E\int_t^T|A_n(Y_s^n)|^2ds+\frac{1}{2n}E\int_t^T|f(s,Y_s^n,Z_s^n)|^2ds\\
&\leq&E\psi_n(\xi)+\frac{1}{2n}E\int_t^T|A_n(Y_s^n)|^2ds+\frac{C}{n}E\int_t^T|Y_s^n|^2ds\\&&
+\frac{C}{n}E\int_t^T||Z_s^n||^2ds+\frac{C}{n}E\int_t^T|f(s,0,0)|^2ds.
 \end{array}\end{equation}
Lemma 3.1 shows
\begin{center}
$E\psi_n(Y_t^n)+\frac{1}{n}E\int_t^T|A_n(Y_s^n)|^2ds\leq
\frac{C}{n},$
\end{center}
which implies the desired result.
\end{proof}
\begin{la}
Under the assumptions of Theorem 3.1, $(Y^n,Z^n)$ be a Cauchy
sequence in $S^2\times \mathcal{P}^2(l^2).$
\end{la}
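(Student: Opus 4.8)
The plan is to establish the Cauchy property by comparing the solutions of the penalized equation (3) for two indices $n,m\geq 1$. Writing $\bar Y=Y^n-Y^m$ and $\bar Z=Z^n-Z^m$ (the terminal values coincide), It\^o's formula applied to $|\bar Y_t|^2$, in the same form used for (4) in Lemma 3.1, gives
\[
|\bar Y_t|^2+\int_t^T\|\bar Z_s\|^2\,ds=2\int_t^T\bar Y_s\big(f(s,Y^n_s,Z^n_s)-f(s,Y^m_s,Z^m_s)\big)\,ds-2\int_t^T\bar Y_s\big(A_n(Y^n_s)-A_m(Y^m_s)\big)\,ds-2\sum_{i=1}^\infty\int_t^T\bar Y_s\bar Z^{(i)}_s\,dH^{(i)}_s.
\]
I would then estimate the three terms separately, the second (penalization) term being the crucial one.

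For the cross term I would use the resolvent identity $Y^n_s-J_n(Y^n_s)=\frac1n A_n(Y^n_s)$ and its analogue for $m$ to write $\bar Y_s=\frac1n A_n(Y^n_s)-\frac1m A_m(Y^m_s)+\big(J_n(Y^n_s)-J_m(Y^m_s)\big)$. By Proposition 2.1(3), $A_n(Y^n_s)\in\partial\Phi(J_n(Y^n_s))$ and $A_m(Y^m_s)\in\partial\Phi(J_m(Y^m_s))$, so monotonicity of $\partial\Phi$ gives $\big(J_n(Y^n_s)-J_m(Y^m_s)\big)\big(A_n(Y^n_s)-A_m(Y^m_s)\big)\geq 0$. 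Discarding this nonnegative contribution and the nonnegative squares $\frac2n|A_n(Y^n_s)|^2$, $\frac2m|A_m(Y^m_s)|^2$, I obtain the pointwise bound $-2\bar Y_s\big(A_n(Y^n_s)-A_m(Y^m_s)\big)\leq 2\big(\frac1n+\frac1m\big)|A_n(Y^n_s)|\,|A_m(Y^m_s)|$; taking expectation, Cauchy--Schwarz together with Lemma 3.2 bounds the corresponding term by $2\big(\frac1n+\frac1m\big)C_2\to 0$. For the driver term I would use (H3) in the form $|f(s,Y^n_s,Z^n_s)-f(s,Y^m_s,Z^m_s)|\leq\sqrt{C}(|\bar Y_s|+\|\bar Z_s\|)$ and Young's inequality so that $2\bar Y_s\big(f(s,Y^n_s,Z^n_s)-f(s,Y^m_s,Z^m_s)\big)\leq C|\bar Y_s|^2+\frac12\|\bar Z_s\|^2$, the $\|\bar Z_s\|^2$ part being absorbed into the left-hand side. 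Taking expectations then yields $E|\bar Y_t|^2+\frac12 E\int_t^T\|\bar Z_s\|^2\,ds\leq C\,E\int_t^T|\bar Y_s|^2\,ds+2\big(\frac1n+\frac1m\big)C_2$, so Gronwall's inequality gives $E|\bar Y_t|^2\leq 2(\frac1n+\frac1m)C_2 e^{CT}$ uniformly in $t\in[0,T]$, and substituting back shows $E\int_0^T\|\bar Z_s\|^2\,ds\to 0$ as $n,m\to\infty$.

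It then remains to upgrade the estimate on $Y$ to the $S^2$-norm. For this I would take $\sup_{0\leq t\leq T}$ in the It\^o identity before taking expectations: the $A$-term is handled exactly as above, since the pointwise bound makes $\sup_t\big(-2\int_t^T\bar Y_s(A_n(Y^n_s)-A_m(Y^m_s))\,ds\big)\leq 2(\frac1n+\frac1m)\int_0^T|A_n(Y^n_s)||A_m(Y^m_s)|\,ds$; the driver term is controlled by $C\int_0^T|\bar Y_s|^2\,ds+\frac12\int_0^T\|\bar Z_s\|^2\,ds$; and the martingale term is handled by the Burkholder--Davis--Gundy inequality, which gives $E\sup_t\big|\sum_i\int_t^T\bar Y_s\bar Z^{(i)}_s\,dH^{(i)}_s\big|\leq\frac14 E\sup_t|\bar Y_t|^2+C\,E\int_0^T\|\bar Z_s\|^2\,ds$. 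Absorbing $\frac14 E\sup_t|\bar Y_t|^2$ on the left and invoking the $L^2$-bounds just established gives $E\sup_{0\leq t\leq T}|\bar Y_t|^2\to 0$. Hence $(Y^n,Z^n)$ is a Cauchy sequence in $S^2\times\mathcal{P}^2(l^2)$.

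I expect the main obstacle to be the treatment of the penalization cross term $\bar Y_s\big(A_n(Y^n_s)-A_m(Y^m_s)\big)$: it is not itself sign-definite, and the device is to route $\bar Y_s$ through the resolvents $J_n,J_m$ so as to exploit monotonicity of $\partial\Phi$, leaving only the remainder $\big(\frac1n A_n(Y^n_s)-\frac1m A_m(Y^m_s)\big)\big(A_n(Y^n_s)-A_m(Y^m_s)\big)$, which is made to vanish precisely because of the uniform bound $E\int_0^T|A_n(Y^n_s)|^2\,ds\leq C_2$ furnished by Lemma 3.2. The remaining ingredients (Young's inequality, Gronwall, Burkholder--Davis--Gundy) are routine BSDE estimates.
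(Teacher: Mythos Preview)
Your proposal is correct and follows essentially the same route as the paper: apply It\^o's formula to $|Y^n_t-Y^m_t|^2$, decompose $\bar Y_s$ via the resolvent identity $I=J_n+\frac{1}{n}A_n$ and exploit the monotonicity of $\partial\Phi$ on $(J_n(Y^n_s)-J_m(Y^m_s))(A_n(Y^n_s)-A_m(Y^m_s))$ to control the penalization cross term by a quantity of order $\frac{1}{n}+\frac{1}{m}$ times the $L^2$ bound on $A_n$ from Lemma~3.2, then handle the driver by (H3) and Young's inequality, apply Gronwall, and finally upgrade to $S^2$ via Burkholder--Davis--Gundy. The only cosmetic difference is that the paper bounds the cross term by $\frac{1}{4m}|A_n(Y^n_s)|^2+\frac{1}{4n}|A_m(Y^m_s)|^2$ (retaining the negative squares in a sharper Young inequality) rather than discarding them and bounding by $2(\frac{1}{n}+\frac{1}{m})|A_n(Y^n_s)|\,|A_m(Y^m_s)|$ as you do; both lead to the same $C(\frac{1}{n}+\frac{1}{m})$ estimate.
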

\begin{proof}
By It\^{o} formula, we have

$|Y_t^n-Y_t^m|^2+\int_t^T||Z_s^n-Z_s^m||^2ds$
\begin{equation}\begin{array}{rcl}
&=&2\int_t^T(Y_s^n-Y_s^m)[f(s,Y_s^n,Z_s^n)-f(s,Y_s^m,Z_s^m)]ds\\
&&-2\int_t^T(Y_s^n-Y_s^m)(A_n(Y_s^n)-A_m(Y_s^m))ds\\
&&-2\sum_{i=1}^\infty \int_t^T
(Y_s^n-Y_s^m)(Z_s^{(i),n}-Z_s^{((i),m})dH_s^i.
 \end{array}\end{equation}
For

$I=J_n+\frac{1}{n}A_n=J_m+\frac{1}{m}A_m,\ A_m\in
\partial\Phi(J_m(Y_s^m)),\ A_n\in
\partial\Phi(J_n(Y_s^n)),$

we can obtain

$-(Y_s^n-Y_s^m)(A_n(Y_s^n)ds-A_m(Y_s^m))\leq
\frac{1}{4m}|A_n(Y_s^n)|^2+\frac{1}{4n}|A_m(Y_s^m)|^2.$

So, we get

 $E|Y_t^n-Y_t^m|^2+E\int_t^T||Z_s^n-Z_s^m||^2ds$
\begin{equation}\begin{array}{rcl}
&\leq&2CE\int_t^T(|Y_s^n-Y_s^m|^2+|Y_s^n-Y_s^m||Z_s^n-Z_s^m|)ds\\
&&+E\int_t^T(\frac{1}{4m}|A_n(Y_s^n)|^2+\frac{1}{4n}|A_m(Y_s^m)|^2)ds\\
&\leq&
2CE\int_t^T[(1+\beta)|Y_s^n-Y_s^m|^2+\frac{1}{\beta}|Z_s^n-Z_s^m|^2]ds\\
&&+E\int_t^T(\frac{1}{4m}|A_n(Y_s^n)|^2+\frac{1}{4n}|A_m(Y_s^m)|^2)ds.
 \end{array}\end{equation}
Choosing $\beta$ such that $\frac{2C}{\beta}<\frac{1}{2},$ then

$\sup_{0\leq t\leq
T}E|Y_t^n-Y_t^m|^2+\frac{1}{2}E\int_0^T||Z_s^n-Z_s^m||^2ds\leq
C(\frac{1}{n}+\frac{1}{m}).$

Further, Bulkholder-Davis-Gundy inequality shows
\begin{equation}E\sup_{0\leq t\leq
T}|Y_t^n-Y_t^m|^2+\frac{1}{2}E\int_0^T||Z_s^n-Z_s^m||^2ds\leq
C(\frac{1}{n}+\frac{1}{m}),\end{equation} which shows our desired
result.
\end{proof}
In order to obtain the existence of solutions to Eq. (2), we give
the following Lemma appeared in Saisho \cite{Sa}.

\begin{la}Let $\{K^{(n)}, n\in N\}$ be a family of continuous functions of finite
variation on $R^+$. Assume that:

(i) $\sup_n |K^{(n)}|_t \leq C_t <\infty, \ 0\leq t <\infty;$

 (ii) $\lim_{n\rightarrow \infty} K^{(n)} = K \in C([0,+\infty);R).$

  Then K is of finite variation. Moreover, if $\{ f^{(n)}, n \in N\}$ is a family of continuous functions
such that $\lim_{n\rightarrow \infty} f^{(n)} = f\in
C([0,+\infty);R),$  then

$\lim_{n\rightarrow
\infty}\int_s^t<f_u^{(n)},dK_u^{n)}>=\int_s^t<f_u,dK_u>,\ \forall
0\leq s\leq t<\infty.$
\end{la}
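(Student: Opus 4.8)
First I would set up the problem in the standard way: the lemma is a purely deterministic, pathwise statement about continuous finite-variation functions, so the stochastic apparatus of the paper plays no role here. Fix a path; I want to show that if $K^{(n)}\to K$ uniformly on compacts, with the total variations $|K^{(n)}|_t$ bounded uniformly in $n$ by a locally bounded function $C_t$, then $K$ itself has finite variation on each $[0,t]$, and moreover the Riemann--Stieltjes integrals against $dK^{(n)}$ converge to the integral against $dK$ whenever the integrands $f^{(n)}$ converge uniformly to $f$. The natural first step is to identify each $K^{(n)}$ with a signed measure $\mu_n$ on $[0,t]$ via $\mu_n((s,u]) = K^{(n)}_u - K^{(n)}_s$; hypothesis (i) says $\|\mu_n\|_{TV} \le C_t$, so the family $\{\mu_n\}$ is bounded in the total variation norm, i.e. bounded in the dual of $C([0,t])$.

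The key step is a weak-$*$ compactness argument. By the Banach--Alaoglu theorem the ball of radius $C_t$ in $M([0,t]) = C([0,t])^{*}$ is weak-$*$ compact and, since $C([0,t])$ is separable, metrizable; hence some subsequence $\mu_{n_k}$ converges weak-$*$ to a measure $\mu$ with $\|\mu\|_{TV}\le C_t$. Testing this convergence against indicator-like approximations of $\mathbf 1_{(s,u]}$ (using continuity of $K$ to handle the endpoints), one gets $\mu((s,u]) = \lim_k (K^{(n_k)}_u - K^{(n_k)}_s) = K_u - K_s$, so $\mu$ is exactly the Lebesgue--Stieltjes measure of $K$ and therefore $K$ is of finite variation with $|K|_t \le C_t$. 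Because this limiting measure is uniquely determined by $K$ (its values on intervals are fixed), the whole sequence $\mu_n$ — not just a subsequence — converges weak-$*$ to $dK$.

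For the convergence of the integrals, I would write
$$\int_s^t \langle f_u^{(n)}, dK_u^{(n)}\rangle - \int_s^t \langle f_u, dK_u\rangle = \int_s^t \langle f_u^{(n)} - f_u, dK_u^{(n)}\rangle + \left(\int_s^t \langle f_u, dK_u^{(n)}\rangle - \int_s^t \langle f_u, dK_u\rangle\right).$$
The first term is bounded in absolute value by $\|f^{(n)} - f\|_{\infty,[s,t]}\,|K^{(n)}|_t \le C_t \|f^{(n)} - f\|_{\infty,[s,t]} \to 0$ by uniform convergence of $f^{(n)}$ and (i); the second term tends to $0$ because $f$ is a fixed continuous function and $dK^{(n)} \to dK$ weak-$*$. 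Combining the two estimates gives the claim.

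The main obstacle — and the only place requiring care — is the endpoint issue in identifying the weak-$*$ limit: weak-$*$ convergence of measures only tests against continuous functions, so one cannot directly evaluate $\mu_n$ on the half-open interval $(s,u]$. The fix is to approximate $\mathbf 1_{(s,u]}$ from inside and outside by continuous functions and use that $K$ is continuous (so $K$ carries no atoms at $s$ or $u$, and the approximation error in $K_u - K_s$ vanishes). A cleaner alternative, avoiding measure theory altogether, is to run a direct $\varepsilon$-$\delta$ Riemann-sum argument: uniform convergence of $K^{(n)}$ lets one control the difference of Riemann--Stieltjes sums for $f$ against $K^{(n)}$ and against $K$ on a common tagged partition, while (i) controls the mesh-refinement error uniformly in $n$; this is the route I would actually write up, keeping the measure-theoretic picture in mind as motivation.
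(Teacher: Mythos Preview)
The paper does not actually prove this lemma: it is stated without proof and attributed to Saisho \cite{Sa} (``we give the following Lemma appeared in Saisho''). So there is nothing to compare your argument against on the paper's side; you are supplying what the authors deliberately omitted.

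Your proposal is correct. A couple of remarks. The first conclusion, that $K$ has finite variation with $|K|_t\le C_t$, is more elementary than you make it: for any partition $s=t_0<\cdots<t_m=t$ one has $\sum_j |K_{t_j}-K_{t_{j-1}}|=\lim_n \sum_j |K^{(n)}_{t_j}-K^{(n)}_{t_{j-1}}|\le C_t$, and no compactness is needed. For the integral convergence, your splitting
\[
\int_s^t f^{(n)}\,dK^{(n)}-\int_s^t f\,dK=\int_s^t (f^{(n)}-f)\,dK^{(n)}+\Bigl(\int_s^t f\,dK^{(n)}-\int_s^t f\,dK\Bigr)
\]
is the right move, and the first term is handled exactly as you say. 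For the second term, the weak-$*$/Banach--Alaoglu route works but, as you correctly flag, the identification of the limiting measure on half-open intervals needs care because the $K^{(n)}$ need not have uniformly small variation on short intervals. Your ``cleaner alternative'' avoids this entirely: approximate $f$ uniformly on $[s,t]$ by a step function $\phi$, use $|K^{(n)}|_t\le C_t$ and $|K|_t\le C_t$ to control $\int (f-\phi)\,dK^{(n)}$ and $\int (f-\phi)\,dK$ uniformly in $n$, and observe that $\int \phi\,dK^{(n)}\to\int \phi\,dK$ is just a finite sum of increments of $K^{(n)}$ converging to those of $K$. That is the argument I would write up; the measure-theoretic picture is good intuition but unnecessary machinery here.
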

{\bf Proof of Theorem 3.1} \textsl{Existence.} Lemma 3.3 shows that
$(Y^n,Z^n)$ is a Cauchy sequence in space  $S^2\times
\mathcal{P}^2(l^2).$ We denote its limit as $(Y,Z)$. At the same
time, it is easy to verify that $(K^n)_{n\in N}$ converges uniformly
in $L^2(\Omega)$ to the process $K_\cdot=\lim_{n\rightarrow
\infty}\int_0^\cdot A_n(Y_s^n)ds$, that is $E\sup_{0\leq t \leq
T}|K_t^n-K_t|^2=0.$

Denote $H^1(0,T;R)$ be the Sobolev space consisting of all
absolutely continuous functions with derivative in $L^2(0,T)$. Lemma
3.2 shows
\begin{center}
$\sup_{n\in N}E||K^n||^2_{H^2(0,T;R)}<\infty,$
\end{center}
which implies that the sequence $K^n$ is bounded in
$L^2(\Omega;H^1(0,T;R))$. So, there exists an absolutely continuous
function $K\in L^2(\Omega;H^1(0,T;R))$ which is the weak limit of
$K^n$. Further, $\frac{dK_t}{dt}=V_t,$ where $-V_t\in \partial
\Phi(Y_t).$

In the following, we verify that $(Y,Z,K)$ is the unique solution to
Eq. (2). Taking a subsequence, if necessary, we can show

$\sup_{0\leq t \leq T}|Y_t^n-Y_t|\rightarrow 0,$

$\sup_{0\leq t \leq T}|K_t^n-K_t|\rightarrow 0.$

It follows that $Y_t$ is c\`{a}gl\`{a}g and $K_t$ is continuous.
Further, let $(\alpha,\beta)$ be a c\`{a}gl\`{a}g process with
values in $Gr(\partial \Phi),$  it holds
\begin{center}
$<J_n(Y_t^n)-\alpha_t,dK_t^n+\beta_tdt>\leq 0.$
\end{center}
Since $J_n$ is a contraction and $Y^n$ uniformly converges to $Y$ on
$[0,T]$. It follows that $J_n(Y_t^n)$ converges to $pr(Y)$
uniformly, where $pr$ denotes the projection on
$\overline{Dom(\Phi)}$. Lemma 3.4 shows
\begin{center}
$<pr(Y_t)-\alpha_t,dK_t+\beta_tdt>\leq 0.$
\end{center}
In order to complete the proof of the existence, we need to verify
the following
\begin{center}
$P(Y_t\in \overline{Dom(\Phi)},0\leq t \leq T)=1.$
\end{center}
For the right continuity of $Y$, it suffices to prove
\begin{center}
$P(Y_t\in \overline{Dom(\Phi)})=1,\ 0\leq t \leq T.$
\end{center}

If not so, there exists $0<t<T$ and $B_0\in \mathcal {F}$ such that
$P(B_0)>0$ and $Y_{t_0}\not\in \overline{Dom(\Phi)}, \forall \omega
\in B_0.$ By the right continuity, there exists $\delta>0,\ B_1\in
\mathcal {F}$ such that $P(B_1)>0,\ Y_t \not\in
\overline{Dom(\Phi)}$ for $(\omega,t)\in B_1\times
[t_0,t_0+\delta].$

Using the fact $\sum_{n\in N}E\int_0^T|A_n(Y_s^n)|ds<\infty$ and
Fatou Lemma, we get
\begin{center}
$\int_{B_1}\int_{t_0}^{t_0+\delta}\liminf_{n\rightarrow
\infty}|A_n(Y_s^n)|dsdP<\infty,$
\end{center}
which is impossible for $\liminf_{n\rightarrow
\infty}|A_n(Y_s^n)|=\infty$ on the set $B_1\times [t_0,t_0+\delta].$

\textsl{Uniqueness.} Let $(Y_t,Z_t,K_t)_{0\leq t \leq T}$ and
$(Y_t^\prime,Z_t^\prime,K_t^\prime)_{0\leq t \leq T}$ be two
solutions for Eq. (2). Define
\begin{center}
$(\triangle Y_t,\triangle Z_t, \triangle K_t)_{0\leq t \leq
T}=(Y_t-Y_t^\prime,Z_t-Z_t^\prime,K_t-K_t^\prime)_{0\leq t \leq T}.$
\end{center}
It\^{o} formula shows

 $E|\triangle Y_t|^2+E\int_t^T||\triangle Z_t||^2ds$
\begin{equation}\begin{array}{rcl}
&=&2E\int_t^T\triangle Y_s
[f(s,Y_s,Z_s)-f(s,Y_s^\prime,Z_s^\prime)]ds+2E\int_t^T\triangle
Y_sd\triangle K_s.
 \end{array}\end{equation}
Since $\partial \Phi$ is monotone and $-\frac{dK_t}{dt}\in \partial
\Phi(Y_t),\ -\frac{dK_t^\prime}{dt}\in \partial \Phi(Y_t^\prime),$
we obtain
\begin{center} $E\int_t^T\triangle Y_sd\triangle K_s\leq
0.$\end{center} Further,

 $E|\triangle Y_t|^2+E\int_t^T||\triangle Z_t||^2ds\leq CE\int_t^T|\triangle Y_s|^2ds
 +\frac{1}{2}E\int_t^T||\triangle Z_s||^2ds.$

Gronwall inequality shows the uniqueness of the solutions to Eq.
(2).

\section{Applications}
In this section, we study the link between RBSDEs driven L\'{e}vy
processes and the solution of a class of PDIIs. Suppose that our
L\'{e}vy process has the form of $L_t=at+X_t $ where $X_t$ is a pure
jump L\'{e}vy process with L\'{e}vy measure $\nu(dx).$

In order to attain our main result, we give a Lemma appeared in
\cite{Nu2}.
\begin{la}
Let $c:\Omega\times [0,T]\times \mathbb{R}\rightarrow \mathbb{R}$ be
a measurable function such that
\begin{center}
$|c(s,y)|\leq a_s(y^2\wedge |y|)\ a.s.,$
\end{center}
where $\{a_s,s\in [0,T]\}$ is a non-negative predictable process
such that $E\int_0^Ta_s^2ds<\infty.$ Then, for each $0\leq t \leq
T,$ we have

$\begin{array}{rcl} \sum_{t<s\leq T}c(s,\Delta
L_s)&=&\sum_{i=1}^\infty
\int_t^T<c(s,\cdot),p_i>_{L^2(\nu)}dH_s^{(i)}\\
&&+\int_t^T\int_{\mathbb{R}}c(s,y)\nu(dy)ds.\end{array}$
\end{la}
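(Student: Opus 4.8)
The plan is to push everything onto the compensated Poisson random measure $\tilde N(ds,dy)=N(ds,dy)-ds\,\nu(dy)$ of the jumps of $L$ (equivalently of $X$), and then to exploit two facts from Nualart and Schoutens \cite{Nu1}: first, the Teugels martingales are exactly the $\tilde N$-integrals of the functions $p_i(y)=y\,q_{i-1}(y)$, i.e. $H^{(i)}_t=\int_0^t\int_{\mathbb R}p_i(y)\,\tilde N(ds,dy)$; and second, $(p_i)_{i\ge 1}$ is an orthonormal system in $L^2(\mathbb R,\nu)$ whose closed linear span is $\{y\,h(y):h\in L^2(\mathbb R,\mu)\}$, where $\mu(dy)=y^2\nu(dy)$ (since $X$ is pure jump, $\sigma=0$). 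I would also record at the outset that the two standing hypotheses on $\nu$ give $\int_{\mathbb R}(y^2\wedge|y|)\,\nu(dy)<\infty$ and even $\int_{\mathbb R}(y^2\wedge|y|)^2\,\nu(dy)<\infty$; these finiteness facts are precisely what the bound $|c(s,y)|\le a_s(y^2\wedge|y|)$ is designed to activate.

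First I would verify that $\sum_{t<s\le T}c(s,\Delta L_s)=\int_{(t,T]\times\mathbb R}c(s,y)\,N(ds,dy)$ converges absolutely almost surely: on $[0,T]$ there are only finitely many jumps of size $\ge 1$, while for the small jumps the compensation formula for the nonnegative predictable integrand $a_sy^2\mathbf 1_{\{|y|<1\}}$ gives $E\sum_{t<s\le T}a_s(\Delta L_s)^2\mathbf 1_{\{|\Delta L_s|<1\}}=E\!\int_0^T\! a_s\,ds\cdot\int_{|y|<1}y^2\nu(dy)<\infty$, using $E\!\int_0^T\! a_s\,ds\le\sqrt T\,(E\!\int_0^T\! a_s^2\,ds)^{1/2}$. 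The same estimate yields $E\!\int_t^T\!\int_{\mathbb R}|c(s,y)|\,\nu(dy)\,ds<\infty$ and $E\!\int_t^T\!\int_{\mathbb R}c(s,y)^2\,\nu(dy)\,ds<\infty$, so the compensation formula applies and $\int_{(t,T]\times\mathbb R}c\,N=\int_{(t,T]\times\mathbb R}c\,\tilde N+\int_t^T\!\int_{\mathbb R}c(s,y)\,\nu(dy)\,ds$, the middle term being a well-defined square-integrable stochastic integral against $\tilde N$. Hence it remains only to identify $\int_{(t,T]\times\mathbb R}c(s,y)\,\tilde N(ds,dy)$ with $\sum_{i=1}^\infty\int_t^T\langle c(s,\cdot),p_i\rangle_{L^2(\nu)}\,dH^{(i)}_s$.

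For that identification, set $\gamma^{(i)}_s:=\langle c(s,\cdot),p_i\rangle_{L^2(\nu)}$. For a.e. $(\omega,s)$ one has $c(s,\cdot)/y\in L^2(\mu)$, since $\int_{\mathbb R}|c(s,y)/y|^2\,\mu(dy)=\int_{\mathbb R}c(s,y)^2\,\nu(dy)<\infty$; therefore $c(s,\cdot)$ lies in the closed span of $(p_i)$, so $c(s,\cdot)=\sum_i\gamma^{(i)}_s\,p_i$ in $L^2(\nu)$ with $\sum_iE\!\int_t^T|\gamma^{(i)}_s|^2\,ds=E\!\int_t^T\|c(s,\cdot)\|_{L^2(\nu)}^2\,ds<\infty$, and $(\gamma^{(i)})_i$ is predictable. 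Putting $c^{(n)}(s,y):=\sum_{i=1}^n\gamma^{(i)}_s\,p_i(y)$, linearity and $H^{(i)}_\cdot=\int_0^\cdot\!\int p_i\,\tilde N$ give $\int_{(t,T]\times\mathbb R}c^{(n)}\,\tilde N=\sum_{i=1}^n\int_t^T\gamma^{(i)}_s\,dH^{(i)}_s$. Letting $n\to\infty$: the left side converges in $L^2(\Omega)$ to $\int_{(t,T]\times\mathbb R}c\,\tilde N$ by the $\tilde N$-isometry and dominated convergence, since $\|c(s,\cdot)-c^{(n)}(s,\cdot)\|_{L^2(\nu)}^2\to 0$ for a.e. $s$ and is dominated by $\|c(s,\cdot)\|_{L^2(\nu)}^2\le a_s^2\int(y^2\wedge|y|)^2\nu(dy)$, integrable over $(s,\omega)$; the right side converges in $L^2(\Omega)$ to $\sum_{i=1}^\infty\int_t^T\gamma^{(i)}_s\,dH^{(i)}_s$ by the strong orthonormality of the $H^{(i)}$. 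Matching the two limits and reinstating the compensator term from the previous step finishes the proof.

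I expect the real difficulty to sit entirely in the ingredients borrowed from \cite{Nu1}: simultaneously establishing $H^{(i)}_t=\int_0^t\!\int p_i\,\tilde N$ and the density of $\mathrm{span}(p_i)$. The delicate points there are the drift hidden in the first power-jump process $L^{(1)}_t=L_t$, and the fact that neither $\sum_{t<s\le T}p_i(\Delta L_s)$ nor $\int_{\mathbb R}|p_i|\,d\nu$ need converge because the $p_i$ are linear near $0$ — which is exactly why one must work throughout inside the $L^2$/compensated-integral framework and never split compensators off individual $p_i$; the density of the orthonormal polynomials $q_{i-1}$ in $L^2(\mu)$ is where the exponential-moment hypothesis (2) on $\nu$ enters, via a Carleman-type determinacy argument. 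Granting \cite{Nu1}, everything else above is routine $L^2$ bookkeeping.
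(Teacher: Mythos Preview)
The paper does not prove this lemma; it merely quotes it from Nualart and Schoutens \cite{Nu2} and uses it as a black box. Your argument is correct and is essentially the proof given in \cite{Nu2}: pass from the jump sum to an integral against the Poisson random measure $N$, compensate to isolate the $ds\,\nu(dy)$ term, and then expand $c(s,\cdot)$ in the orthonormal system $(p_i)$ using the isometry between $L^2(\tilde N)$-integrals and the Teugels martingales $H^{(i)}$. Your identification of the two places where the growth bound $|c(s,y)|\le a_s(y^2\wedge|y|)$ is used (absolute summability of the jump series and finiteness of both $\int|c|\,d\nu$ and $\int c^2\,d\nu$) is accurate, and your remark that in the pure-jump case the span of $(p_i)$ contains every $c(s,\cdot)\in L^2(\nu)$ via $c(s,y)=y\cdot c(s,y)/y$ is exactly the point that makes the expansion go through; in fact this shows $(p_i)$ is a genuine orthonormal \emph{basis} of $L^2(\nu)$ here, so the clause ``whose closed linear span is $\{yh:h\in L^2(\mu)\}$'' could be strengthened to ``which is complete in $L^2(\nu)$''.
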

Consider the following coupled RBSDEs:
\begin{equation}
\begin{array}{rcl}
Y_t&=&h(L_T)+\int_t^Tf(s,L_s,Y_s,Z_s)ds+K_T-K_t-\sum_{i=1}^\infty
\int_t^TZ_s^{(i)}dH_s^{(i)},\ 0\leq t \leq T,
\end{array}
\end{equation}
where $E|h(L_T)|^2<\infty.$

Define
\begin{center}
$u^1(t,x,y)=u(t,x+y)-u(t,x)-\frac{\partial u}{\partial x}(t,x)y,$
\end{center}
where $u$ is the solution of the following PDIIs:
\begin{equation}
\left\{\begin{array}{ll}
\begin{array}{rcl} \frac{\partial u}{\partial
t}(t,x)+a^\prime \frac{\partial u}{\partial
x}(t,x)+f(t,u(t,x),\{u^{(i)}(t,x)\}_{i=1}^\infty)
&&\\+\int_{\mathbb{R}}u^1(t,x,y)\nu(dy)\in \partial
\Phi(u(t,x)),\end{array}
\\
u(T,x)=h(x)\in \overline{Dom(\Phi)},  \end{array}\right.
\end{equation}
 where $a^\prime =a+\int_{\{|y|\geq
1\}}y\nu(dy)$ and
\begin{center}
$u^{(1)}(t,x)=\int_{\mathbb{R}}u^1(t,x,y)p_1(y)\nu(dy)+\frac{\partial
u}{\partial x}(t,x)(\int_{\mathbb{R}}y^2\nu(dy))^{\frac{1}{2}},$
\end{center}
and for $i\geq 2$
\begin{center}
$u^{(i)}(t,x)=\int_{\mathbb{R}}u^1(t,x,y)p_i(y)\nu(dy).$
\end{center}

Suppose that $u$ is $\mathcal {C}^{1,2}$ function such that
$\frac{\partial u}{\partial t}$ and $\frac{\partial^2 u}{\partial
x^2}$ is bounded by polynomial function of $x,$ uniformly in $t$.
Then we have the following
\begin{thm}
The unique adapted solution of (19) is given by
\par
$Y_t=u(t,L_t),$
\par
$K_t=\int_0^tV_sds,\ -V_s\in \partial \Phi(u(t,L_t)),$
\par
$Z_t^{(i)}=\int_{\mathbb{R}}u^1(t,L_{t-},y)p_i(y)\nu(dy),\ i\geq 2,$
\par
$Z_t^{(1)}=\int_{\mathbb{R}}u^1(t,L_{t-},y)p_1(y)\nu(dy)+\frac{\partial
u}{\partial x}(t,L_{t-})(\int_{\mathbb{R}}
y^2\nu(dy))^{\frac{1}{2}}.$
\end{thm}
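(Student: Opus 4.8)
The plan is to apply the It\^{o} change--of--variables formula for c\`{a}dl\`{a}g semimartingales to the process $u(t,L_t)$, to rewrite its jump part by means of Lemma 4.1 so that it becomes a stochastic integral against the Teugels martingales plus a Lebesgue integral, and then to read off the triple $(Y,Z,K)$ by comparing the resulting decomposition with equation (19) and the inclusion (20); the uniqueness assertion will be inherited from Theorem 3.1.

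First I would record that, since $L$ has moments of all orders, $\int_{\{|y|\geq 1\}}|y|\,\nu(dy)<\infty$, so the pure--jump part admits the L\'{e}vy--It\^{o} representation $X_t=M_t+t\int_{\{|y|\geq 1\}}y\,\nu(dy)$ with $M_t=\int_0^t\int_{\mathbb{R}}y\,\tilde N(ds,dy)$ a square--integrable martingale; hence $L_t=a't+M_t$ with $a'$ as in (20). Moreover $M_t=Y_t^{(1)}=L_t-tE[L_1]$, and by the normalisation of the Teugels martingales $dM_s=\big(\int_{\mathbb{R}}y^2\nu(dy)\big)^{1/2}\,dH_s^{(1)}$. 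Since $u\in\mathcal{C}^{1,2}$ and $M$ is purely discontinuous, the It\^{o} formula yields
\[
u(t,L_t)=u(0,L_0)+\int_0^t\Big[\frac{\partial u}{\partial s}+a'\frac{\partial u}{\partial x}\Big](s,L_{s-})\,ds+\int_0^t\frac{\partial u}{\partial x}(s,L_{s-})\,dM_s+\sum_{0<s\leq t}u^1(s,L_{s-},\Delta L_s),
\]
the jump contribution being exactly $u(s,L_{s-}+\Delta L_s)-u(s,L_{s-})-\frac{\partial u}{\partial x}(s,L_{s-})\Delta L_s$.

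Next, using the growth hypotheses on $\frac{\partial^2 u}{\partial x^2}$ (and on $u$) one checks that $c(s,y):=u^1(s,L_{s-},y)$ satisfies $|c(s,y)|\leq a_s(y^2\wedge|y|)$ with $E\int_0^T a_s^2\,ds<\infty$, so Lemma 4.1 applies and
\[
\sum_{0<s\leq t}u^1(s,L_{s-},\Delta L_s)=\sum_{i=1}^\infty\int_0^t\!\Big(\int_{\mathbb{R}}u^1(s,L_{s-},y)p_i(y)\nu(dy)\Big)dH_s^{(i)}+\int_0^t\!\int_{\mathbb{R}}u^1(s,L_{s-},y)\nu(dy)\,ds.
\]
Absorbing $\int_0^t\frac{\partial u}{\partial x}(s,L_{s-})\,dM_s=\big(\int_{\mathbb{R}}y^2\nu(dy)\big)^{1/2}\int_0^t\frac{\partial u}{\partial x}(s,L_{s-})\,dH_s^{(1)}$ into the $i=1$ term identifies the coefficient of $dH_s^{(1)}$ with $Z_s^{(1)}$ and the coefficients of $dH_s^{(i)}$, $i\geq 2$, with $Z_s^{(i)}$, exactly as in the statement; moreover $Z\in\mathcal{P}^2(l^2)$ by the same domination, and $Y_t=u(t,L_t)\in S^2$ by the polynomial growth of $u$ and the moments of $L$. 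Plugging in (20), whose left--hand side $G(s,L_s):=\frac{\partial u}{\partial s}+a'\frac{\partial u}{\partial x}+f(s,L_s,Y_s,Z_s)+\int_{\mathbb{R}}u^1(s,L_{s-},y)\nu(dy)$ lies in $\partial\Phi(u(s,L_s))$, and setting $V_s:=-G(s,L_s)$, $K_t:=\int_0^t V_s\,ds$, the decomposition becomes
\[
u(t,L_t)=u(0,L_0)-\int_0^t f(s,L_s,Y_s,Z_s)\,ds-K_t+\sum_{i=1}^\infty\int_0^t Z_s^{(i)}\,dH_s^{(i)};
\]
evaluating at $t=T$ with $u(T,\cdot)=h(\cdot)$ and subtracting recovers (19).

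Finally I would verify that $(Y,Z,K)$ fulfils Definition 2.1: $K$ is absolutely continuous with $K_0=0$; $Y$ is c\`{a}dl\`{a}g with values in $\overline{Dom(\Phi)}$ since $u(t,x)\in Dom(\partial\Phi)$ by (20); and for any c\`{a}dl\`{a}g $(\alpha_t,\beta_t)\in Gr(\partial\Phi)$, monotonicity of $\partial\Phi$ applied to $(Y_t,-V_t)$ and $(\alpha_t,\beta_t)$ gives $(Y_t-\alpha_t)(V_t+\beta_t)\leq 0$, hence $\int_0^\cdot(Y_t-\alpha_t)(dK_t+\beta_t\,dt)\leq 0$. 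Thus $(Y,Z,K)$ solves (19), and since (19) is a particular instance of (2), Theorem 3.1 shows it is the unique solution. The step I expect to be the main obstacle is precisely the technical justification of Lemma 4.1 here, namely exhibiting an $L^2$ predictable process $a_s$ with $|u^1(s,L_{s-},y)|\leq a_s(y^2\wedge|y|)$: this is where the polynomial growth of $\frac{\partial^2 u}{\partial x^2}$ together with a moment input on $L$ is genuinely used, and it is also what guarantees $Z\in\mathcal{P}^2(l^2)$ and legitimises the interchange of summation and (stochastic) integration above; once that is in place, the rest is a direct computation.
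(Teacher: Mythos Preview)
Your proposal is correct and follows essentially the same route as the paper: apply the It\^{o} formula to $u(t,L_t)$, rewrite the jump sum via Lemma~4.1, split $dL_s$ into its drift $a'ds$ and the martingale part $(\int y^2\nu(dy))^{1/2}dH_s^{(1)}$, and then identify the resulting decomposition with (19) using the inclusion (20). Your write-up is in fact more complete than the paper's, which omits the explicit verification of the domination hypothesis of Lemma~4.1, the check that $(Y,Z,K)$ meets the requirements of Definition~2.1, and the appeal to Theorem~3.1 for uniqueness.
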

\begin{proof}
Applying It\^{o} formula to $u(s,L_s)$, we obtain

\hspace{1cm}$u(T,L_T)-u(t,L_t)$
\begin{equation}
\begin{array}{rcl}  &=& \int_t^T \frac{\partial
u}{\partial s}(s,L_{s-})ds+\int_t^T \frac{\partial u}{\partial
x}(s,L_{s-})dL_s\\
&&+\sum_{t<s\leq T}[u(s,L_s)-u(s,L_{s-})-\frac{\partial u}{\partial
x}(s,L_{s-})\triangle L_s] .\end{array}
\end{equation}
Lemma 4.1 applied to $u(s,L_{s-}+y)-u(s,L_{s-})-\frac{\partial
u}{\partial x}(s,L_{s-})y$ shows

$\sum_{t<s\leq T}[u(s,L_s)-u(s,L_{s-})-\frac{\partial u}{\partial
x}(s,L_{s-})\triangle L_s]$
\begin{equation}
\begin{array}{rcl}  &=& \sum_{i=1}^\infty
\int_t^T(\int_{\mathbb{R}}u^1(s,L_{s-},y)p_i(y)\nu(dy))dH_s^{(i)}+\int_t^T
\int_{\mathbb{R}}u^1(s,L_{s-},y)\nu(dy)ds.\end{array}
\end{equation}
Note that
\begin{equation}
L_t=Y_t^{(1)}+tEL_1=(\int_{\mathbb{R}}y^2\nu(dy))^{\frac{1}{2}}H_t^{(1)}+tEL_1,
\end{equation}
where $EL_1=a+\int_{\{|y|\geq 1\}}y\nu(dy).$

Hence, substituting (22) and (23) into (21) yields

$ h(L_T)-u(t,L_t)$

 $\begin{array}{rcl}&=& \int_t^T[\frac{\partial
u}{\partial s}(s,L_{s-})+a\frac{\partial u}{\partial
x}(s,L_{s-})+\int_{\{|y|\geq
1\}}y\nu(dy)+\int_{\mathbb{R}}u^1(s,L_{s-},y)\nu(dy)]ds\\
&&+\int_t^T[u^1(s,L_{s-},y)p_1(y)\nu(dy)+\frac{\partial
u}{\partial x}(s,L_{s-})(\int_{\mathbb{R}}y^2\nu(dy))^{\frac{1}{2}}]dH_s^{(1)}\\
&&+\sum_{i=2}^\infty
\int_t^T(\int_{\mathbb{R}}u^1(s,L_{s-},y)p_i(y)\nu(dy))dH_s^{(i)}.
\end{array}$

From which we get the desired result of the Theorem.
\end{proof}

{\bf\large Acknowledgements}

 The authors would like to
 thank the Australian Research Council for funding this project through Discovery Project DP0770388 and
 The Committee of National
Natural Science Foundation in China for the Project 10726075.
%%%%%%%%%%%%%%%%%%%%%%%%%%
%Acknowledgement
%%%%%%%%%%%%%%%%%%%%%%%%%%

%%%%%%%%%%%%%%%%%%%%%%%%%%
%Reference
%%%%%%%%%%%%%%%%%%%%%%%%%%
\begin{center}
~~\\
\textbf{References}
\end{center}
%\begin{thebibliography}{1}
\begin{enumerate}
\footnotesize
\bibitem{Ba1}Bahlali, K. Essaky, El. and Ouknine, Y. Reflected backward stochastic differnetial equations
 with jumps and locally Lipschitz coefficient. $Random$ $Oper.$ $Stoch.$ $Equ.,$
{\bf10} (2002)335-350.

\bibitem{Ba2}Bahlali, K. Essaky, El. and Ouknine, Y. Reflected backward stochastic differnetial equations
 with jumps and locally monotone coefficient. $Stoch.$ $Anal.$ $Appl.,$
{\bf 22}(2004)939-970.

\bibitem{B} Balasubramaniam, P. and Ntouyas, S.K. Controllability for neutral
stochastic functional differential inclusions with infinite delay in
abstract space. $J.$  $Math.$  $Anal.$ $Appl.,$ {\bf 324} (2006)
161-176.

\bibitem{Br} Brezis, H. \textsl{Op\'{e}ateurs maximaux monotones,
Mathematics studies.}  North Holland. (1973).

\bibitem{El}El Karoui, N. Kapoudjian, C.  Pardoux, E.  Peng, S. and Quenez, M.C.
Reflected solutions of backward SDE and related obstacle problems
for PDEs. $Ann.$ $Prob.,$ {\bf 25}(1997), 702-737.

\bibitem{Geg} Gegout-Petit, A. Filtrage d$^,$un processus partiellement observ\'{e} et \'{e}quations
diff\'{e}rentie-lles stochastiques r\'{e}trogrades refl\'{e}chies.
Th\`{e}se de doctorat de l$^,$universit\'{e} de
Provence-Aix-Marseille. (1995).

\bibitem{Gong} Gong, G.  $An$ $Introduction$ $of$ $Stochastic$
$Differential$ $Equations$, 2nd edition, Peking University of China,
Peking, (2000).

\bibitem{Ha1} Hamad\`{e}ne, S. Reflected BSDEs with discontinuous barrier and applications.
 $Stoch.$ $Stoch.$ $Rep.,$ {\bf 74}(2002), 571-596.

\bibitem{Ha2} Hamad\`{e}ne, S. and Ouknine,Y. Reflected backward stochastic differential equations with jumps and
random obstacle. $Elec.$ $J.$ $Prob.,$ {\bf 8}(2003), 1-20.

\bibitem{Kunita} Kunita, H. $Stochastic$ $Flows$ $and$ $Stochastic$
$Differential$ $Equations$, Cambridge University Press, (1990).

\bibitem{L} Lepeltier, J.-P. and Xu, M. Peneliazation method for
reflected backward stochastic differential equations with one
r.c.l.l. barrier. $Stat.$ $Prob.$ $Letters,$ {\bf 75}(2005), 58-66.

\bibitem{Ma}Matoussi, A.
Reflected solutions of backward stochastic differential equations
with continuous coefficient. $Stat.$ $Prob.$ $Letters,$ {\bf
34}(1997), 347-354.

\bibitem{Mn}N$^,$Zi, M. and Ouknine, Y. Backward stochastic differential equations with jumps invoving a subdifferential
 operator. $Random$ $Oper.$ $Stoch.$ $Equ.,$
{\bf 8} (2000)319-338.

\bibitem{Nu1} Nualart, D. and Schoutens, W. Chaotic and predictable
representation for L\'{e}vy processes. $Stoch.$ $Proc.$ $Appl.,$
{\bf 90} (2000)109-122,

\bibitem{Nu2} Nualart, D. and Schoutens, W. Backward stochastic
differential equations and Feymann-Kac formula for L\'{e}vy
processes, with applications in finance. $Bernoulli,$ {\bf
5}(2001)761-776.

\bibitem{Ou} Ouknine, Y. Reflected BSDE with jumps. $Stoch.$ $Stoch.$ $Reports$.
 {\bf 65} (1998)111-125.

 \bibitem{PP1} Pardoux, E. and Peng, S. Adapted solution of a backward
stochastic differential equation. $Syst.$ $Cont.$ $Letters,$ {\bf
14}(1990)55-61.

\bibitem{PR}Pardoux, E. and R\^{a}canu, A. Backward stochastic differential equations with subdifferential
 operator and related variational inequalities. $Stoch.$ $Proc.$ $Appl.,$
{\bf 76}(2000)191-215.

\bibitem{Ren1} Ren, Y. and Hu, L. Reflected backward stochastic differnetial equations driven by L\'{e}vy processes.
$Stat.$ $Prob.$ $Letters,$ {\bf 77}(2007), 1559-1566.

%\bibitem{Ren2} Ren, Y. Lin, A. and Hu, L. Stochastic PDIEs and backward
%doubly stochastic differential equations driven by L\'{e}vy
%processes. $J.$ $Comp.$ $Appl.$
%$Math.$,doi:10.1016/j.cam.
%2008.03.008.

\bibitem{Ren3} Ren, Y. and Xia, N. Generalized reflected BSDEs and an obstacle problem for PDEs with a nonlinear Neumann boundary
condition. $Stoch.$ $Anal.$ $Appl.,$ {\bf 24}(2006), 1013-1033.

\bibitem{Sa} Saisho, Y. Stochastic differential equations for muiltidimensional domains with reflecting boundary.
$Prob.$ $Theory$ $and$ $Rel.$ $Fields,$ {\bf 74}(1987), 455-477.

 \bibitem{Ta} Tang, S. and Li, X. Necessary condition for optional control of stochastic system with random jumps.
$SIAM$ $J.$ $Control$ $Optim.$, {\bf32}(1994)1447-1475.

\end{enumerate}
%\end{thebibliography}
\end{document}